\documentclass[12pt]{article}
\usepackage{amssymb,amsfonts, amsmath,amsthm,epsfig,tabularx}
\usepackage{caption}
\usepackage{float}
\restylefloat{table}

\usepackage{hyperref}

\usepackage{caption} 
\usepackage{subcaption} 

\newtheorem{theorem}{Theorem}
\newtheorem{proposition}[theorem]{Proposition}
\newtheorem{lemma}[theorem]{Lemma}

\newtheorem{observation}[theorem]{Observation}

\newtheorem{problem}[theorem]{Problem}



\newcommand{\ivs}{{\rm ivs_{\chi}}}
\newcommand{\vs}{{\rm vs_{\chi}}}
\newcommand{\C}{{C_{\chi}}}
\newcommand{\Aut}{{\rm Aut}}

\textheight 207 true mm
\textwidth 167 true mm
\voffset=-11mm
\hoffset=-16mm

\textwidth=16cm
\hoffset=-1.2cm
\voffset=-2.cm
\textheight=23cm

\begin{document}

\title{On chromatic vertex stability of 3-chromatic graphs with maximum degree 4}
\author{Martin Knor$^{1}$, Mirko Petru\v{s}evski$^{2}$,
Riste \v{S}krekovski$^{3,4}$ \\[0.3cm] {\small $^{1}$ \textit{Slovak University of Technology in Bratislava,
Bratislava, Slovakia}}\\[0.1cm] {\small $^{2}$ \textit{University Ss Cyril and Methodius, MFS, 1000 Skopje, Macedonia }}\\[0.1cm] {\small $^{3}$ \textit{University of Ljubljana, FMF, 1000 Ljubljana,
Slovenia }}\\[0.1cm] {\small $^{4}$ \textit{Faculty of Information Studies, 8000 Novo
Mesto, Slovenia }}\\[0.1cm] }

\maketitle

\begin{abstract}
The (independent) chromatic vertex stability ($\ivs(G)$) $\vs(G)$ is the minimum size of (independent) set $S\subseteq V(G)$ such that $\chi(G-S)=\chi(G)-1$.
In this paper we construct infinitely many graphs $G$ with $\Delta(G)=4$, $\chi(G)=3$, $\ivs(G)=3$ and $\vs(G)=2$, which gives a partial negative answer to a problem posed in \cite{ABKM}.
\end{abstract}

%
%
\section{Introduction}

Let $G$ be a graph.
Its edge stability number, $\rm{es}_{\chi}(G)$, is the minimum number of edges whose deletion results in a graph $H$ with $\chi(H)=\chi(G)-1$.
Edge stability number was introduced in 1980 by Staton~\cite{S}, and rediscoverd in 2008 by Arumugam, Sahul Hamid and Muthukamatchi~\cite{ASM}.
For recent results on this invariant see e.g. \cite{ABBDMM, AKMN, BKM, KMM}.

General concept of stability number appeared in~\cite{BHNS}, but the first paper on chromatic vertex stability number was written by Akbari, Beikmohammadi, Klav{\v z}ar and Movarraei in 2021, see~\cite{ABKM}.
The chromatic vertex stability $\vs(G)$ of $G$ is the minimum number of vertices of $G$ such that their deletion results in a graph $H$ with $\chi(H)=\chi(G)-1$.
Analogously, the independent chromatic vertex stability $\ivs(G)$ of $G$ is the minimum number of independent vertices  of $G$ such that their deletion results in a graph $H$ with $\chi(H)=\chi(G)-1$. Obviously, $\vs(G)\leq\ivs(G)$.
The main result of \cite{ABKM} is the following.

\begin{theorem}
\label{thm:ABKM}
If $G$ is a graph with $\chi(G)\in\{\Delta(G),\Delta(G)+1\}$ then $\vs(G)=\ivs(G)$.
\end{theorem}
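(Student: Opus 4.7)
The easy inequality $\vs(G)\le \ivs(G)$ is immediate, since every independent witness for $\ivs(G)$ is also a witness for $\vs(G)$. My plan is to prove the reverse inequality by showing that any minimum set $S\subseteq V(G)$ achieving $\chi(G-S)=\chi(G)-1$ can be turned into an independent set of the same size. Set $k=\chi(G)$, fix such an $S$, and let $c$ be a proper $(k-1)$-coloring of $G-S$.

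The key structural step is the bound
\[
|N(v)\cap S|\ \le\ \Delta(G)-(k-1)\qquad \text{for every } v\in S.
\]
Indeed, if some $v\in S$ had $|N(v)\setminus S|\le k-2$, then at most $k-2$ of the $k-1$ colors appear on $v$'s neighbors in $G-S$ under $c$, so $c$ extends to $v$, producing a proper $(k-1)$-coloring of $G-(S\setminus\{v\})$. Monotonicity of $\chi$ under vertex-subsets then gives $\chi(G-(S\setminus\{v\}))=k-1$, contradicting $|S|=\vs(G)$.

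When $\chi(G)=\Delta(G)+1$ the bound reads $|N(v)\cap S|\le 0$, so $S$ is already independent and the proof is complete. Suppose therefore $\chi(G)=\Delta(G)=k$; the bound only gives $|N(v)\cap S|\le 1$, i.e., $G[S]$ is a disjoint union of matching edges and isolated vertices. Among all minimum sets I would now choose $S$ additionally so as to minimize $e:=|E(G[S])|$, and aim to prove $e=0$. Assume for contradiction that $uv\in E(G[S])$. Then equality in the bound above forces $\deg_G(u)=\deg_G(v)=k$, each of $u,v$ has exactly one neighbor in $S$ (the other endpoint of $uv$), and under \emph{every} $(k-1)$-coloring of $G-S$ the $k-1$ external neighbors of $u$ (respectively $v$) receive all $k-1$ distinct colors---otherwise the coloring would extend to $u$ or to $v$.

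The crux and main obstacle is an edge-reducing swap based on this rigidity. Pick an external neighbor $w\in N(u)\setminus S$ and set $S'=(S\setminus\{u\})\cup\{w\}$; removing $w$ from $G-S$ frees its color for $u$, so $\chi(G-S')=k-1$. If $w$ can be chosen with $w\not\sim v$ and $|N(w)\cap S|=1$ (only $u$), then $G[S']$ loses the edge $uv$ without gaining any edge, contradicting the minimality of $e$. The obstruction is the scenario where no such $w$ exists: then either every external neighbor of $u$ lies also in $N(v)$---making $u$ and $v$ share the common external neighborhood $\{w_1,\dots,w_{k-1}\}$---or some $w_i$ has an extra neighbor in $S\setminus\{u,v\}$. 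In the shared-neighborhood scenario, $\chi(G)=k$ forbids $K_{k+1}\subseteq G$, so two of the $w_i$'s, say $w_i$ and $w_j$, are non-adjacent; the double-swap $S^{\star}=(S\setminus\{u,v\})\cup\{w_i,w_j\}$ then still satisfies $\chi(G-S^\star)=k-1$ (color $u$ and $v$ with the two colors freed by deleting $w_i,w_j$) and contains strictly fewer matching edges than $G[S]$, again contradicting the choice of $S$. Executing this case analysis cleanly---showing that any new adjacencies created by a swap can be iteratively eliminated, or else force the forbidden $K_{k+1}$-type obstruction---is the hard part of the argument.
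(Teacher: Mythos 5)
First, a point of comparison: the paper does not actually prove Theorem~\ref{thm:ABKM}; it is imported from~\cite{ABKM}, so there is no in-paper proof to measure your argument against. Judged on its own terms, the first half of your proposal is correct. The observation that every $v\in S$ in a minimum deleting set has at least $\chi(G)-1$ neighbours outside $S$ (else the $(\chi(G)-1)$-colouring of $G-S$ extends to $v$, contradicting minimality) is sound, and it immediately settles the case $\chi(G)=\Delta(G)+1$, where $S$ is forced to be independent. The derivation that $G[S]$ is a matching when $\chi(G)=\Delta(G)$, that both endpoints of a matched edge $uv$ have degree $\Delta(G)$ with external neighbourhoods that are rainbow under every $(\chi(G)-1)$-colouring of $G-S$, and the single-swap step when a suitable $w$ exists, are all correct.

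The gap is in the endgame, and you flag it yourself. The failure of the single swap does not split into the two scenarios you name: the correct negation is that \emph{every} $w\in N(u)\setminus S$ has a second neighbour in $S$, and that second neighbour may be $v$ for some $w$'s and a vertex of $S\setminus\{u,v\}$ for others; the latter possibility is never treated. (Applying your own matching bound to the minimum set $(S\setminus\{u\})\cup\{w\}$ shows each such $w$ has \emph{exactly} one neighbour in $S\setminus\{u\}$, namely $v$ or a vertex isolated in $G[S]$---and in the second case the swap keeps $e$ constant rather than decreasing it, so the induction does not close without a finer potential function.) In the shared-neighbourhood scenario, your assertion that the double swap $S^{\star}=(S\setminus\{u,v\})\cup\{w_i,w_j\}$ strictly decreases $e$ is stated without justification: a priori $w_i$ or $w_j$ could bring new edges into $G[S^{\star}]$. (It does hold, but only after one argues, as above, that $N(w_i)\cap S=N(w_j)\cap S=\{u,v\}$.) Since you explicitly defer ``executing this case analysis cleanly'' as ``the hard part,'' the case $\chi(G)=\Delta(G)$---which is precisely the nontrivial half of the theorem---is not proved.
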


The authors showed that as soon as $\chi(G)\le\frac{\Delta(G)+1}2$ the equality $\vs(G)=\ivs(G)$ need no longer be true. Whether $\vs(G)=\ivs(G)$ always holds if $\frac{\Delta(G)}{2}+1\leq\chi(G)\leq\Delta(G)-1$ was left unanswered, so they asked the following question (see Problem~3.2 in~\cite{ABKM}).

\begin{problem}
\label{prob:main}
Is it true that if $G$ is a graph with $\chi(G)\ge\frac{\Delta(G)}2+1$, then $\vs(G)=\ivs(G)$?
\end{problem}

In this paper we prove that Problem~\ref{prob:main} answers in the negative for $\chi(G)=3$ and $\Delta(G)=4$. A simple `ladder-like' counterexample on $9$ vertices is depicted in Figure~\ref{ladder}. Notice that the ladder part of the counterexample can be of any length $4k$ for $k\ge2$.

\begin{figure}[htp!]
	$$
    \includegraphics[scale=0.55]{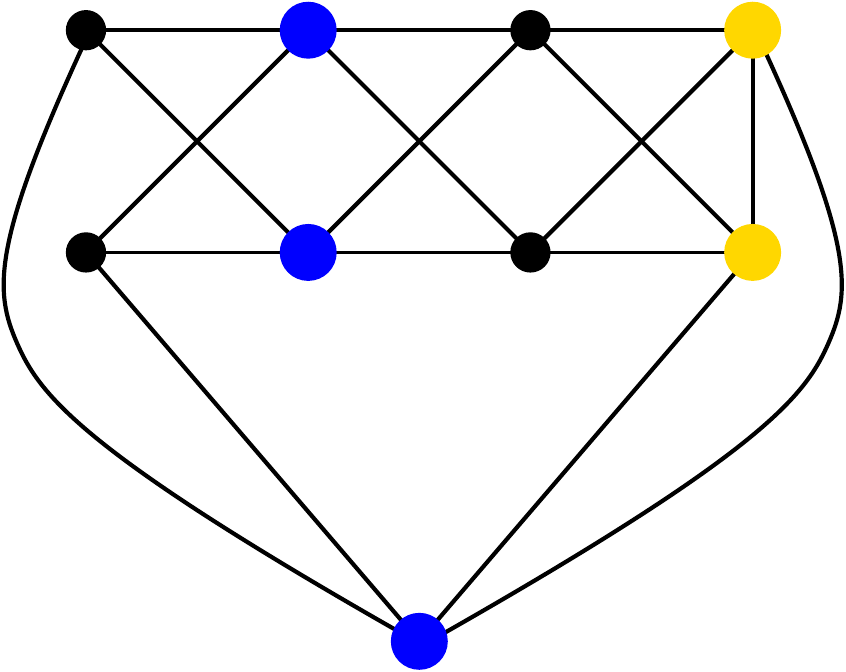}
	$$
	\caption{A graph $G$ with $\Delta(G)=4$ and $\chi(G)=3$. The blue independent $3$-set realizes $\ivs(G)=3$ and the yellow $2$-set realizes $\vs(G)=2$.}
	\label{ladder}
\end{figure}

Our main result is the following.

\begin{theorem}
\label{thm:main}
For each $n\ge 9$ there are at least $\max\{1,2^{\lfloor\frac{n-11}2\rfloor}\}$ planar graphs with $\chi(G)=3$, $\Delta(G)=4$, $\ivs(G)=3$ and $\vs(G)=2$.
\end{theorem}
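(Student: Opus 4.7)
\medskip
\noindent\textbf{Proof plan.}
The plan is to start from the $9$-vertex base graph $G_0$ of Figure~\ref{ladder} and build the claimed family by elongating and locally modifying its ladder part, so as to realize every $n\ge 9$ with the required properties and in enough pairwise non-isomorphic ways to witness the count.

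\emph{Step 1: The base graph.}
Directly from the drawing we read off $\Delta(G_0)=4$ and $\chi(G_0)=3$; the yellow pair certifies $\vs(G_0)\le 2$ and the blue triple certifies $\ivs(G_0)\le 3$. We then argue the matching lower bounds by inspection: for $\vs(G_0)\ge 2$ exhibit, for each vertex $v\in V(G_0)$, a surviving odd cycle in $G_0-v$; for $\ivs(G_0)\ge 3$ do the same for every independent pair $\{x,y\}\subseteq V(G_0)$. Since $G_0$ has only nine vertices, this is a small finite check.

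\emph{Step 2: Elongating the ladder.}
As already noted, the ladder part can be taken of any length $4k$ with $k\ge 2$; call the resulting graph $G_k$, on $n=4k+1$ vertices. We verify that every $G_k$ still has $\Delta=4$, $\chi=3$, $\vs=2$ and $\ivs=3$: the ``pinched'' end(s) remain the only locations where odd cycles can close, so the yellow pair (suitably re-indexed) continues to realize $\vs=2$, and the structural reason for $\ivs\ge 3$ (see below) transfers verbatim.

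\emph{Step 3: Covering all $n\ge 9$.}
To cover the remaining residue classes of $n$, we introduce short in-ladder insertion gadgets of one, two, and three extra vertices. Each gadget adds exactly the stated number of vertices while preserving $\Delta\le 4$, $\chi=3$, $\vs=2$ and $\ivs=3$; this is a case-by-case verification parallel to Step~1.

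\emph{Step 4: Counting.}
For the doubling we introduce two pairwise non-isomorphic two-vertex ``brick'' gadgets $B_0,B_1$, each substitutable for a segment of the ladder. Once $n\ge 11$, the ladder contains $\lfloor(n-11)/2\rfloor$ pairwise disjoint brick slots; independent choices among $\{B_0,B_1\}$ in these slots produce $2^{\lfloor(n-11)/2\rfloor}$ distinct brick sequences. Because $G_0$ breaks the reflective symmetry of the ladder (the pinched end is a unique vertex), the slots are pairwise distinguishable, and distinct sequences yield non-isomorphic graphs. Planarity is preserved throughout, since every operation (elongation, insertion, brick substitution) is local and leaves an explicit planar embedding.

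\emph{Main obstacle.}
The delicate point is $\ivs=3$ for every member of the family. The plan is to prove a structural lemma of the form: in every constructed graph $G$, every odd cycle passes through (at least one of) the two vertices forming the yellow pair of the base graph, and moreover every two-vertex transversal of the odd-cycle system consists of an adjacent pair. Such a statement rules out independent $2$-sets as realizers and forces $\ivs(G)\ge 3$, reducing the infinite family to a bounded certificate; verifying its hypotheses for the bricks $B_0$, $B_1$ and the auxiliary residue-gadgets is where most of the real work lies.
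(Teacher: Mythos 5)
Your plan has the same architecture as the paper's proof (a small base graph, an elongation step to reach every order $n\ge9$, and independent binary local modifications counted via broken symmetry), but as written it is a scaffold rather than a proof: every load-bearing object is left undefined, and the one verification you correctly single out as delicate is explicitly deferred. The most concrete danger is in Step~3. Inserting an \emph{odd} number of vertices into an edge of the ladder flips the parity of every cycle through that edge; depending on where this happens it can make the graph bipartite, create a single vertex whose deletion bipartizes the graph (destroying $\vs=2$), or create an independent bipartizing pair (destroying $\ivs=3$). The paper avoids this entirely: its subdivision proposition only ever inserts an \emph{even} number of vertices per edge (and only into edges with at most one end in $\C(G)$, precisely to avoid manufacturing an independent realizing pair), and it covers both parities of $n$ by starting from two different base graphs $G_9$ and $G_{10}$ rather than by odd-size gadgets. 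Until you exhibit your $1$- and $3$-vertex gadgets and check all four invariants for them, the residue classes $n\not\equiv 1\pmod 4$ are not covered.

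The counting step has the same problem in a different guise. The bricks $B_0,B_1$ are unspecified, so one cannot check that substituting them preserves $\ivs=3$, nor that distinct brick sequences give non-isomorphic graphs. The latter is not automatic from ``the pinched end is unique'': a priori an automorphism could still permute slots or reverse the ladder, identifying two different sequences. The paper proves this by showing the automorphism group of each decorated graph is trivial, using the six degree-$4$ vertices, the number of triangles through each, and the lengths of maximal paths of internal degree-$2$ vertices; your argument needs some analogue of this. Finally, your ``main obstacle'' paragraph states the right target ($\ivs\ge3$ is equivalent to saying that every $2$-vertex transversal of the odd cycles is an adjacent pair, given that no single vertex is a transversal), but its proof --- which in the paper is the entire content of the subdivision proposition, tracking exactly which pairs $\{x,y\}$ can bipartize the modified graph --- is the part you leave as future work. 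So the approach is viable and parallel to the paper's, but the three essential verifications (invariant preservation by the gadgets, $\ivs\ge3$, and pairwise non-isomorphism) are all missing.
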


Observe that if $\chi(G)=3$ and $\Delta(G)=4$, then $\chi(G)=\frac{\Delta(G)}2+1=\Delta(G)-1$.
Hence, the bound on $\chi(G)$ in Theorem~{\ref{thm:ABKM}} cannot be relaxed
when $\chi(G)=3$.

%
%
\section{Proofs}

We start with a pair of simple observations followed by a couple of lemmas.

\begin{observation}
    \label{obs:1}
For every graph $G$, $\ivs(G)$ equals the minimum size of a colour class over all proper $\chi(G)$-colourings of $G$. Hence $|V(G)|\geq\ivs(G)\cdot\chi(G)$.
\end{observation}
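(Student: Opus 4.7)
The plan is to establish the equality $\ivs(G)=m(G)$, where $m(G)$ denotes the minimum over all proper $\chi(G)$-colourings $c$ of the minimum colour-class size in $c$, and then derive the cardinality bound as an immediate corollary.

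For $\ivs(G)\leq m(G)$, I would fix a proper $\chi(G)$-colouring attaining $m(G)$ and let $C$ be a smallest colour class. Since $C$ is independent and the remaining classes properly colour $G-C$ with $\chi(G)-1$ colours, we get $\chi(G-C)\leq\chi(G)-1$. The matching lower bound $\chi(G-C)\geq\chi(G)-1$ follows because otherwise a $(\chi(G)-2)$-colouring of $G-C$ could be extended to $G$ by assigning a single fresh colour to the independent set $C$, contradicting the minimality of $\chi(G)$. Hence $C$ is admissible in the definition of $\ivs(G)$ and $\ivs(G)\leq|C|=m(G)$.

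For the reverse inequality $\ivs(G)\geq m(G)$, I would take any independent set $S$ realising $\ivs(G)$, fix a proper $(\chi(G)-1)$-colouring of $G-S$, and colour all vertices of $S$ with one new colour. This produces a proper $\chi(G)$-colouring in which $S$ is itself a colour class, so $m(G)\leq|S|=\ivs(G)$.

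For the consequence $|V(G)|\geq\ivs(G)\cdot\chi(G)$, observe that the argument in the first direction actually works for \emph{every} colour class $C$ of \emph{every} proper $\chi(G)$-colouring: each such $C$ is independent and satisfies $\chi(G-C)=\chi(G)-1$, hence $|C|\geq\ivs(G)$. Summing the sizes of the $\chi(G)$ colour classes of any fixed optimal colouring gives $|V(G)|\geq\chi(G)\cdot\ivs(G)$. There is no real obstacle here; the only point that deserves care is the one-line verification that removing a single independent set cannot drop the chromatic number by more than one, which is exactly what makes both directions of the equality go through.
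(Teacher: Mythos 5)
Your proposal is correct and follows essentially the same route as the paper: one direction by extending a $(\chi(G)-1)$-colouring of $G-S$ with a fresh colour on $S$, the other by taking a smallest colour class $C$ of an optimal colouring and noting $\chi(G-C)=\chi(G)-1$ (since otherwise $G$ would be $(\chi(G)-1)$-colourable). Your explicit summation argument for $|V(G)|\geq\ivs(G)\cdot\chi(G)$ is just a spelled-out version of what the paper calls an immediate consequence.
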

\begin{proof}
Let us first notice that there exists a proper $\chi(G)$-colouring of $G$ with a colour class of size $\ivs(G)$. Indeed, take $S\subseteq V(G)$ to be an independent set of size $|S|=\ivs(G)$ such that $\chi(G-S)=\chi(G)-1$.
Use a proper $(\chi(G)-1)$-colouring of $G-S$ and assign to vertices of $S$ a new colour.
So $\ivs(G)$ is not less than the minimum size of a colour class over all proper $\chi(G)$-colourings of $G$.

Contrarily, consider a proper $\chi(G)$-colouring of $G$ which minimizes the size of a colour class, and let $S$ be such a minimum colour class. Then $S$ is an independent subset of $V(G)$ and $\chi(G-S)\leq\chi(G)-1$. In fact, we must have equality here for otherwise $G$ would admit a proper $(\chi(G)-1)$-colouring. So $\ivs(G)$ is also not more than the minimum size of a colour class over all proper $\chi(G)$-colourings of $G$, which proves our point.

The inequality $|V(G)|\geq\ivs(G)\cdot\chi(G)$ is now an immediate consequence.
\end{proof}

\begin{observation}
    \label{obs:2}
If $\Delta(G)\leq2$ then $\vs(G)=\ivs(G)$.
\end{observation}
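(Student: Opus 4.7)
The plan is to analyse the three possible values of $\chi(G)$ separately. Since $\Delta(G)\leq 2$ forces $G$ to be a disjoint union of paths and cycles, we have $\chi(G)\in\{1,2,3\}$. In view of the trivial inequality $\vs(G)\leq\ivs(G)$, it suffices to exhibit, given any optimal $S$ realising $\vs(G)$, an independent set $S'$ with $|S'|\leq|S|$ and $\chi(G-S')=\chi(G)-1$. If $\chi(G)=1$ then $G$ is edgeless, so $S=V(G)$ is the unique admissible choice, and it is trivially independent.

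Suppose $\chi(G)=2$. Then $G$ is bipartite with at least one edge, and every admissible $S$ is a vertex cover of $G$ with $|S|<|V(G)|$. On every non-trivial component $C$ of $G$---a path $P_n$ with $n\geq 2$ or an even cycle $C_{2k}$---one readily exhibits an independent minimum vertex cover (for instance $\{v_2,v_4,\ldots\}$ in $P_n$, or one colour class in $C_{2k}$). Taking the union of these across components yields an independent vertex cover $S'$ with $|S'|\leq|S|$. Since each non-trivial component $C$ has minimum vertex cover of size strictly less than $|V(C)|$, we also get $|S'|<|V(G)|$, so $G-S'$ is non-empty and edgeless; that is, $\chi(G-S')=1$.

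Finally, suppose $\chi(G)=3$, and let $c$ denote the number of odd-cycle components of $G$. Any admissible $S$ must contain at least one vertex from each odd cycle, so $|S|\geq c$; conversely, selecting one vertex from each odd-cycle component yields a set $S'$ of size $c$ which is automatically independent, as its members lie in pairwise distinct components. Deletion of $S'$ destroys all odd cycles while retaining at least one edge (each odd cycle of length $\geq 3$ becomes a path carrying at least one edge), so $G-S'$ is bipartite and non-edgeless, yielding $\chi(G-S')=2$. Hence $\vs(G)=\ivs(G)=c$. The only delicacy throughout is checking that $\chi(G-S')$ does not fall below $\chi(G)-1$, which is handled in each case by the non-emptiness and edge-retention observations above.
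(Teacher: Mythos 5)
Your proof is correct and follows essentially the same route as the paper: both arguments rest on the classification of graphs with $\Delta(G)\le2$ as disjoint unions of paths and cycles and verify the equality by exhibiting an optimal deleting set that is automatically independent. The paper merely reduces to the connected case with $\vs(G)\ge2$, where $G$ is a path or an even cycle and both parameters equal $\lfloor |V(G)|/2\rfloor$; your component-wise case analysis over $\chi(G)\in\{1,2,3\}$ makes that reduction explicit.
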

\begin{proof}
We may assume that $\vs(G)\geq2$.
Indeed, if $\vs(G)=1$ then obviously $\ivs(G)=1$ as well.
We may also assume that $G$ is connected.
Then $G$ is either a path or an even cycle.
In either case $\vs(G)=\ivs(G)=\lfloor \frac{|V(G)|}{2}\rfloor$.
\end{proof}

As already mentioned, we are interested in finding graphs $G$ for which $\chi(G)\ge\frac{\Delta(G)}2+1$ and $\ivs(G)>\vs(G)$. Our first lemma establishes some implications for the order and the considered stability parameters.

\begin{lemma}
\label{lem:9}
If $\ivs(G)>\vs(G)$ and $\chi(G)\ge\frac{\Delta(G)}2+1$ then $|V(G)|\ge 9$, $\ivs(G)\ge 3$, $\vs(G)\ge 2$ and $\chi(G)\ge3$. Moreover, if $|V(G)|=9$ then $\ivs(G)=3$, $\vs(G)=2$ and $\chi(G)=3$.
\end{lemma}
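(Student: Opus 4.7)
The plan is to derive the four inequalities in order and then note that equality in the last one forces the parameters in the ``moreover'' part.

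First I would argue that $\vs(G)\ge 2$. Clearly $\vs(G)\ge 1$, since deleting nothing leaves $\chi$ unchanged. If $\vs(G)=1$, a single vertex $v$ with $\chi(G-v)=\chi(G)-1$ realizes it; but $\{v\}$ is trivially independent, so $\ivs(G)\le 1$, contradicting $\ivs(G)>\vs(G)$. Hence $\vs(G)\ge 2$, and from $\ivs(G)>\vs(G)$ we get $\ivs(G)\ge 3$.

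Next, I would show $\chi(G)\ge 3$ by contradiction. The hypothesis $\chi(G)\ge \frac{\Delta(G)}{2}+1$ gives $\Delta(G)\le 2(\chi(G)-1)$, so if $\chi(G)\le 2$ then $\Delta(G)\le 2$. Observation~\ref{obs:2} then yields $\vs(G)=\ivs(G)$, contradicting $\ivs(G)>\vs(G)$. Therefore $\chi(G)\ge 3$.

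Combining $\ivs(G)\ge 3$ and $\chi(G)\ge 3$ with Observation~\ref{obs:1}, I obtain
\[
|V(G)|\ge \ivs(G)\cdot \chi(G)\ge 3\cdot 3=9,
\]
which settles the main part of the lemma. For the ``moreover'' clause, assume $|V(G)|=9$. Then equality must hold in the displayed inequality, which (together with $\ivs(G)\ge 3$ and $\chi(G)\ge 3$) forces $\ivs(G)=3$ and $\chi(G)=3$. Finally, $2\le \vs(G)<\ivs(G)=3$ gives $\vs(G)=2$, completing the proof.

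There is no real obstacle here: the whole argument reduces to the two earlier observations once one notices that a singleton is always independent, so the only place where care is needed is the chain of implications excluding $\vs(G)=1$ and $\chi(G)\le 2$.
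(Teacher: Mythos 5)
Your proof is correct and follows essentially the same route as the paper's: exclude $\vs(G)=1$ via the singleton-is-independent observation, exclude $\chi(G)\le 2$ via Observation~\ref{obs:2}, and then apply the bound $|V(G)|\ge\ivs(G)\cdot\chi(G)$ from Observation~\ref{obs:1}, with the equality case read off directly. The only difference is that you spell out the (trivial) justification for $\vs(G)\ge 2$, which the paper simply asserts.
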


\begin{proof}
Since $\ivs(G)>\vs(G)$, we must have $\vs(G)\ge 2$ and consequently $\ivs(G)\ge 3$.

If $\chi(G)\le 2$ then from $\chi(G)\ge\frac{\Delta(G)}2+1$ we get $\Delta(G)\le 2(\chi(G)-1)\le 2$, which in view of Observation~\ref{obs:2} contradicts $\ivs(G)>\vs(G)$.
Hence $\chi(G)\ge 3$.

From the inequality stated in Observation~\ref{obs:1}, it follows that
$|V(G)|\ge\ivs(G)\cdot\chi(G)\ge 3\cdot 3$, that is, $|V(G)|\ge 9$.
And if $|V(G)|=9$ then $\chi(G)=\ivs(G)=3$ and $\vs(G)=2$.
\end{proof}

Figure~\ref{fig:g9g10} depicts two graphs, respectively denoted by $G_9$ and $G_{10}$ in regard to their orders. The former one can be obtained from the 1-skeleton of a regular octagon by subdividing the edges of a triangle. It has $\Delta(G_9)=4$, $\chi(G_9)=\ivs(G_9)=3$ and $\vs(G_9)=2$. Observe that $\chi(G_9-\{x,y\})=2$ if and only if $\{x,y\}=\{v_i,v_j\}$, where $1\le i<j\le 3$, and for an independent set of vertices $\{x,y,z\}$ we have $\chi(G_9-\{x,y,z\})=2$ if and only if $\{x,y,z\}=\{u_i,v_i,w_i\}$, where $1\le i\le 3$. The graph $G_{10}$ is obtained from $G_9$ by adding the vertex $q$ and connecting it to $w_2$ and $w_3$.
It also has $\Delta(G_{10})=4$, $\chi(G_{10})=\ivs(G_{10})=3$ and $\vs(G_{10})=2$. Again $\chi(G_{10}-\{x,y\})=2$ if and only if $\{x,y\}=\{v_i,v_j\}$, where $1\le i<j\le 3$, and for an independent set of vertices $\{x,y,z\}$ we have $\chi(G_{10}-\{x,y,z\})=2$ if and only if $\{x,y,z\}=\{u_i,v_i,w_i\}$ where $2\le i\le 3$.

\begin{figure}[htp!]
	$$
	\includegraphics[scale=0.25]{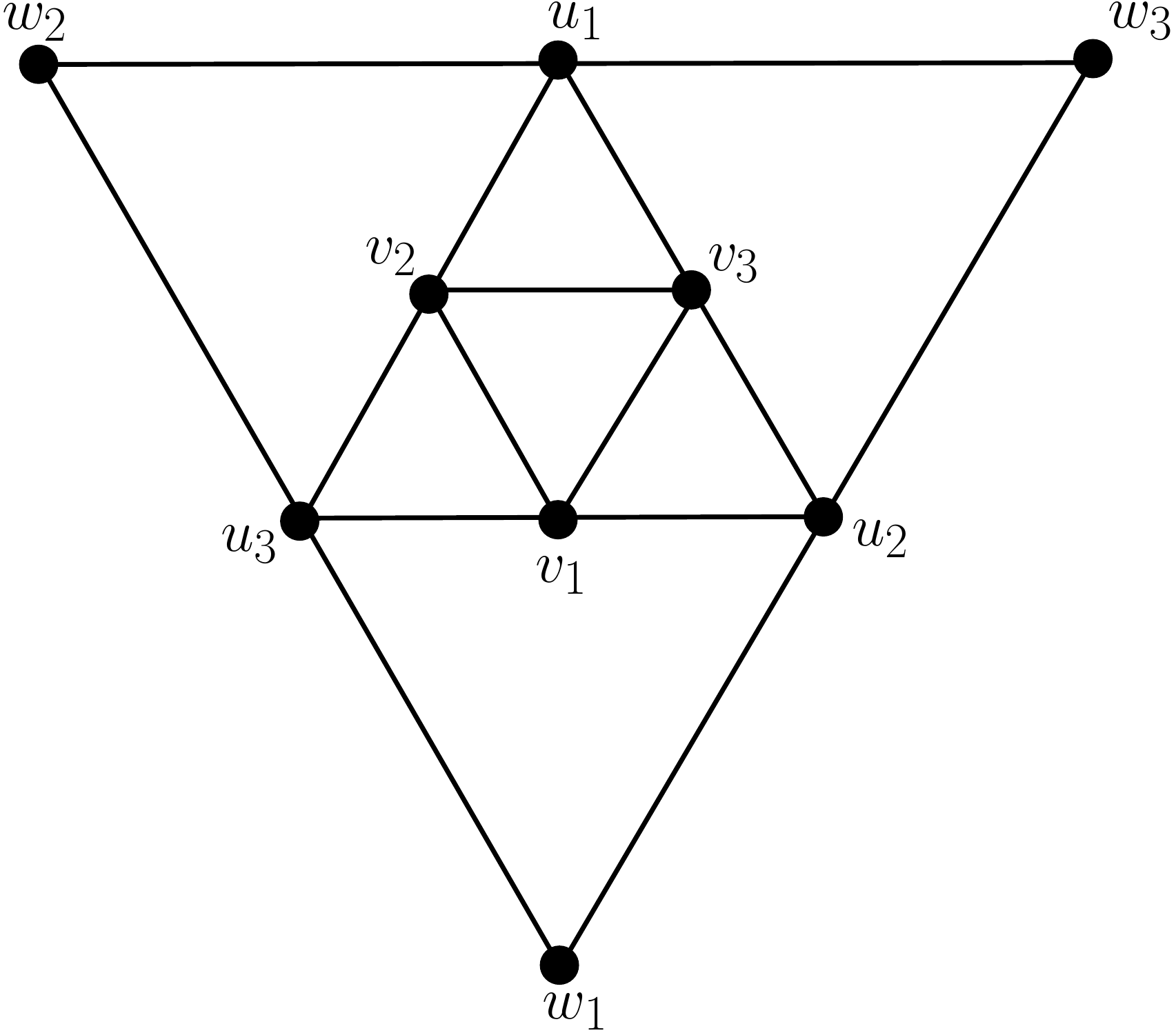} \quad\quad\quad\quad
    \includegraphics[scale=0.3]{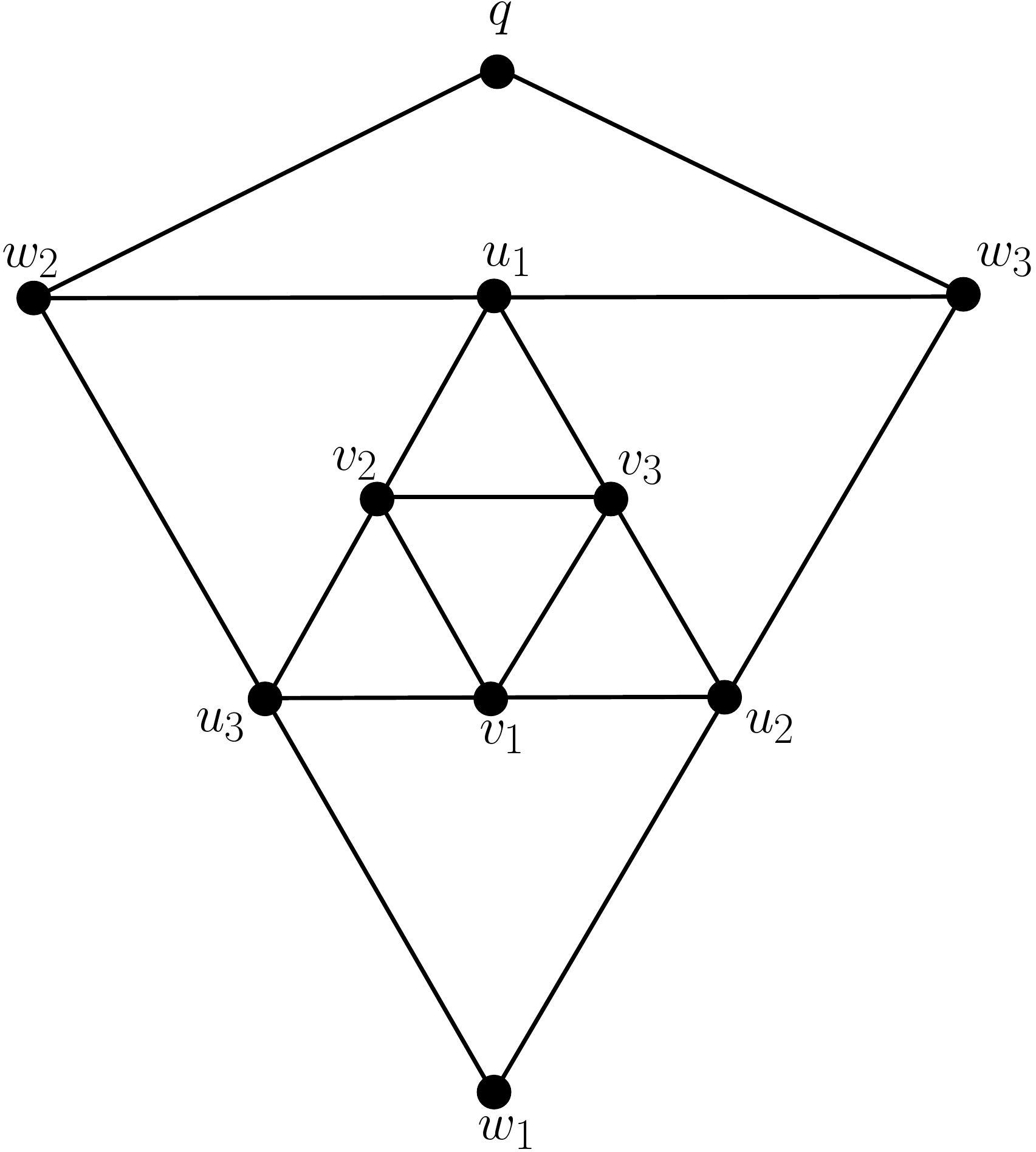}
	$$
	\caption{The graph $G_9$ (left) and the graph $G_{10}$ (right).}
	\label{fig:g9g10}
\end{figure}

Our second lemma concerns the case $\chi(G)=\ivs(G)=3$ and $\vs(G)=2$. Under the assumption $\chi(G)\ge\frac{\Delta(G)}2+1$ we establish the maximum degree of $G$ and the vertex degrees of every $2$-set which realizes $\vs(G)$.

\begin{lemma}
\label{lem:d=4}
Let $G$ be a graph with $\chi(G)=\ivs(G)=3$, $\vs(G)=2$ and $\chi(G)\ge\frac{\Delta(G)}2+1$.
Then $\Delta(G)=4$ and for every $v_1,v_2\in V(G)$ such that $\chi(G-\{v_1,v_2\})=2$ we have $\deg_G(v_1)=\deg_G(v_2)=4$.
\end{lemma}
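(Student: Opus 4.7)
The plan is to pin down $\Delta(G)\le 4$ from the hypotheses and then prove the degree claim about the $\vs$-realizing pair. The bound $\Delta(G)\le 4$ follows immediately from $\chi(G)=3\ge\tfrac{\Delta(G)}{2}+1$, so it suffices to show that both vertices in every $\vs$-realizing pair have degree exactly $4$.

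Fix $v_1,v_2\in V(G)$ with $\chi(G-\{v_1,v_2\})=2$ and set $H=G-\{v_1,v_2\}$. Two preliminary observations come first. (i) $v_1v_2\in E(G)$: otherwise $\{v_1,v_2\}$ is an independent $2$-set whose removal gives a graph of chromatic number $\chi(G)-1$, yielding $\ivs(G)\le 2$, contrary to $\ivs(G)=3$. (ii) $|N_H(v_i)|\ge 2$ for $i=1,2$: since $\vs(G)=2$ forces $\chi(G-v_{3-i})=3$, but if $v_i$ had at most one neighbor in the bipartite graph $H$, every proper $2$-coloring of $H$ would extend to a proper $2$-coloring of $G-v_{3-i}$, a contradiction.

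The main step is by contradiction: assume $\deg_G(v_1)\le 3$. Combined with (i) and (ii) this forces $|N_H(v_1)|=2$; write $N_H(v_1)=\{a_1,a_2\}$. I claim that each of the three pairs $\{v_2,a_1\}$, $\{v_2,a_2\}$ and $\{a_1,a_2\}$ also realizes $\vs(G)$, and I deduce this one at a time. First, for $i\in\{1,2\}$, the graph $G-\{v_2,a_i\}$ is the bipartite graph $H-a_i$ together with the pendant vertex $v_1$ attached to $a_{3-i}$, hence $\chi(G-\{v_2,a_i\})=2$; applying (i) to $\{v_2,a_i\}$ yields $v_2a_i\in E(G)$. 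In particular $\{a_1,a_2\}\subseteq N_G(v_2)$, so $v_2$ has $v_1$ together with at most one further neighbor outside $\{a_1,a_2\}$ (because $\deg_G(v_2)\le 4$). Consequently, in $G-\{a_1,a_2\}$ the vertex $v_1$ has only $v_2$ as a neighbor and $v_2$ has only $v_1$ plus possibly one vertex of $H\setminus\{a_1,a_2\}$ as neighbors; any proper $2$-coloring of the bipartite graph $H-\{a_1,a_2\}$ therefore extends to a proper $2$-coloring of $G-\{a_1,a_2\}$, so applying (i) once more produces $a_1a_2\in E(G)$.

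Putting everything together, the six edges $v_1v_2,v_1a_1,v_1a_2,v_2a_1,v_2a_2,a_1a_2$ are all present in $G$, so $\{v_1,v_2,a_1,a_2\}$ spans a $K_4$, forcing $\chi(G)\ge 4$, the desired contradiction. Hence $\deg_G(v_1)\ge 4$, and by symmetry $\deg_G(v_2)\ge 4$; combined with $\Delta(G)\le 4$ this gives $\deg_G(v_1)=\deg_G(v_2)=4$ and $\Delta(G)=4$. The only slightly delicate ingredient is the last of the three $\vs$-realization claims, for which one first needs $\{a_1,a_2\}\subseteq N_G(v_2)$ in order to control the neighborhood of $v_2$ in $G-\{a_1,a_2\}$; everything else reduces to the bipartiteness of $H$ and its subgraphs.
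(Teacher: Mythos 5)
Your proof is correct and follows essentially the same route as the paper: both arguments assume $\deg_G(v_1)\le 3$, deduce $v_1v_2\in E(G)$, then $\deg_G(v_1)=3$ with $N_G(v_1)=\{v_2,a_1,a_2\}$, then $v_2a_1,v_2a_2\in E(G)$, and both ultimately exploit the degree bound on $v_2$ to reach a contradiction. The only difference is the direction of the final step: the paper observes $a_1a_2\notin E(G)$ (to avoid a $K_4$), finds an odd cycle in $G-\{a_1,a_2\}$ and concludes $\deg_G(v_2)\ge 5>\Delta(G)$, whereas you run the contrapositive, using $\deg_G(v_2)\le 4$ to show $G-\{a_1,a_2\}$ is bipartite, whence $a_1a_2\in E(G)$ and a $K_4$ contradicts $\chi(G)=3$.
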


\begin{proof}
Let $G$ satisfy the assumptions of the lemma and let $v_1,v_2\in V(G)$ such that $\chi(G-\{v_1,v_2\})=2$. Then $\Delta(G)\le2(\chi(G)-1)=4$.
By way of contradiction, suppose that $\deg_G(v_1)\le 3$.
Since $\ivs(G)=3>2$, we have $v_1v_2\in E(G)$.
Further, since $\chi(G-v_2)=3$, there must be an odd cycle in $G-v_2$; moreover,
every such cycle passes through $v_1$ as $G-\{v_1,v_2\}$ is bipartite.
Consequently $\deg_G(v_1)=3$.

Let $u_1,u_2$ be neighbours of $v_1$ in $G-v_2$. Since every odd cycle in $G-v_2$ passes through both $u_1,u_2$, we conclude that
$G-\{u_i,v_2\}$ is bipartite as well, $i\in\{1,2\}$. From $\ivs(G)=3>2$ it follows that $u_1v_2,u_2v_2\in E(G)$.
Moreover, $u_1u_2\notin E(G)$, for otherwise $v_1,v_2,u_1,u_2$ induces $K_4$, implying $\chi(G)\ge 4$.

Since $\ivs(G)\ge 3$, there must be an odd cycle in $G-\{u_1,u_2\}$.
This cycle cannot pass through $v_1$ since $\deg_{G-\{u_1,u_2\}}(v_1)=1$.
If this cycle does not pass through $v_2$ as well then it is in $G-\{v_1,v_2\}$ which means that $\chi(G-\{v_1,v_2\})\ge 3$, a contradiction.
Hence, there is an odd cycle passing through $v_2$ in $G-\{u_1,u_2,v_1\}$, which means that $\deg_G(v_2)\ge 5$. This contradiction settles the lemma.
\end{proof}

 A computer search shows that there are precisely $30$ graphs $G$ of order $9$ and having $\Delta(G)=4$, $\chi(G)=3$,  $\ivs(G)=3$ and $\vs(G)=2$.
Several of them (including $G_9$) are planar and four are obtained by adding an edge to another graph from the same collection.

\medskip

For every $n\ge9$ let $S_n$ be the set of graphs $G$ on $n$ vertices such that $\Delta(G)=4$, $\chi(G)=3$, $\ivs(G)=3$ and $\vs(G)=2$. Thus $G_9\in S_9$ and $G_{10}\in S_{10}$.
By $\C(G)$ we denote the set of vertices $x\in V(G)$ such that there is $y\in V(G)$ for which $\chi(G-\{x,y\})=2$; note in passing that every such $y$ is a neighbour of $x$.
For example, $\C(G_9)=\C(G_{10})=\{v_1,v_2,v_3\}$. In view of our next result, for every $n\ge9$ there is a planar graph in $S_n$ which is topologically equivalent to $G_9$ or $G_{10}$.

\begin{proposition}
\label{pro:subdivide}
Let $G\in S_n$ and
$e_1,e_2,\dots,e_t\in E(G)\backslash E([\C(G)])$, i.e., each $e_i$ has at most one endvertex in $\C(G)$. Let $n_1,n_2,\dots,n_t$ be positive even integers.
For every $i$, $1\le i\le t$, subdivide $e_i$ with $n_i$ new vertices, and denote the resulting graph by $H$.
Then $\Delta(H)=4$, $\chi(H)=3$, $\ivs(H)=3$ and $\vs(H)=2$. In other words, $H\in S_{n+(n_1+\cdots+n_t)}$.
\end{proposition}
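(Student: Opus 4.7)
The plan is to exploit the fact that subdividing each $e_i$ by an \emph{even} number $n_i$ of new vertices preserves the parity of every cycle through $e_i$: the edge $e_i$ becomes a path of odd length $n_i+1$, so an odd cycle of $G$ traversing $e_i$ becomes an odd cycle of $H$, and vice versa. I would first record this in the form of a \emph{bipartiteness transport}: for $S\subseteq V(H)$, writing $S_V=S\cap V(G)$ and letting $E_S$ be the set of $e_i$ whose subdivision path contains a vertex of $S\setminus V(G)$, the graph $H-S$ is bipartite if and only if the graph obtained from $G$ by deleting the vertices $S_V$ and then the remaining edges in $E_S$ is bipartite. This single statement reduces every bipartiteness question about $H$ to one about $G$.

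The parameters $\Delta(H)=4$ and $\chi(H)=3$ are then immediate: subdivision only inserts degree-$2$ vertices, and a proper $3$-colouring of $G$ extends to $H$ by $2$-colouring each subdivision path with the two colours appearing at its endpoints (possible since the endpoints have different colours and the path has odd length), while the transport with $S=\emptyset$ shows $H$ is not bipartite. For $\vs(H)=2$, the upper bound comes from any pair $\{x,y\}\subseteq V(G)$ realising $\vs(G)=2$: both $x,y$ lie in $\C(G)$, so $xy$ is not among the subdivided edges, and the transport yields $\chi(H-\{x,y\})\le 2$. To exclude $\vs(H)=1$, note that for $v\in V(G)$ the graph $G-v$ is non-bipartite (since $\vs(G)\ge 2$), hence so is $H-v$; while removing a subdivision vertex $v$ of $e_j=ab$ would force $G-e_j$, and in particular $G-\{a,b\}$, to be bipartite, placing both $a,b\in\C(G)$ and contradicting $e_j\notin E([\C(G)])$.

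The main obstacle is $\ivs(H)\ge 3$; the matching upper bound follows by transporting an independent $3$-set realising $\ivs(G)=3$. For the lower bound I would assume, towards a contradiction, that $\{x,y\}$ is independent in $H$ with $\chi(H-\{x,y\})\le 2$, apply the transport, and split into cases by how many of $x,y$ are subdivision vertices. If $x,y\in V(G)$, then $G-\{x,y\}$ is bipartite; if $xy\in E(G)$ then $xy$ must be a subdivided $e_i$ (else $\{x,y\}$ is not independent in $H$), and bipartiteness places both endpoints in $\C(G)$, contradicting the hypothesis on $e_i$; otherwise $\{x,y\}$ is independent in $G$ and $\ivs(G)\le 2$. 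If $x\in V(G)$ and $y$ is a subdivision vertex of $e_l=cd$, then $G-x-e_l$ is bipartite; either $x\in\{c,d\}$ gives $\vs(G)\le 1$, or $G-\{x,c\}$ and $G-\{x,d\}$ are both bipartite, forcing $c,d\in\C(G)$ and so $e_l\in E([\C(G)])$. If $x,y$ are subdivision vertices on a single $e_j=ab$, then $G-e_j$ is bipartite, so $G-a$ is bipartite and $\vs(G)\le 1$. Finally, if $x,y$ lie on distinct edges $e_j=ab$ and $e_l=cd$, then $G-e_j-e_l$ is bipartite: if these edges share a vertex, deleting it alone makes $G$ bipartite, and otherwise each of the four pairs $\{\alpha,\gamma\}$ with $\alpha\in\{a,b\}$, $\gamma\in\{c,d\}$ gives a bipartite vertex-deletion of $G$; since $\{a,b,c,d\}$ cannot span $K_4$ (that would force $\chi(G)\ge 4$), at least one such pair is non-adjacent in $G$, hence independent, yielding $\ivs(G)\le 2$. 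Each branch contradicts an assumption on $G$, closing the argument.
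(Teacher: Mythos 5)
Your proof is correct, and its engine --- the parity-preserving ``bipartiteness transport'' that converts every question about $H-S$ into one about $G$ with some vertices and some of the subdivided edges removed --- is exactly the mechanism the paper uses, only stated there implicitly (the paper repeatedly trades a deleted subdivision vertex of an edge $xy$ for the deletion of an endpoint, as in ``since $H-\{u,v\}$ is bipartite, so is $H-\{u,y\}$''). The two arguments diverge only in the lower bound $\ivs(H)\ge3$. The paper always replaces a subdivision vertex by the endpoint of its edge that lies \emph{outside} $\C(G)$ (such an endpoint exists by the hypothesis on the $e_i$), which collapses the whole analysis into one short chain ending in ``$y\in\C(G)$, a contradiction,'' and in particular never needs to treat two subdivision vertices on distinct edges as a separate configuration. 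You instead keep the deleted edges as edges and run a four-way case split, which forces an extra argument in the last case (subdivision vertices on disjoint edges $ab$ and $cd$): there you note that $\{a,b,c,d\}$ cannot induce $K_4$ since $\chi(G)=3$, extract a non-adjacent cross pair whose deletion leaves a bipartite graph, and contradict $\ivs(G)=3$. That $K_4$ step is the one genuinely new ingredient of your write-up, and it is sound; the trade-off is a longer case analysis in exchange for not having to invoke the asymmetry of $\C(G)$-membership of the endpoints. One small point occurring in several of your branches (and glossed over in the paper as well): to conclude that a vertex lies in $\C(G)$ you need $\chi(G-\{x,y\})=2$ exactly, i.e.\ the bipartite remainder must still contain an edge; this follows easily from $|V(G)|\ge9$ together with $\vs(G)=2$, but deserves a half-sentence.
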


\begin{proof}
Obviously $\Delta(H)=\Delta(G)=4$.
As $\chi(G)=3$, the graph $G$ has an odd cycle.
Since to any edge of this cycle we added an even number (possibly zero) of vertices, $H$ also has an odd cycle; thus $\chi(H)\ge 3$.
Moreover, if $S\subseteq V(G)$ is such that $G-S$ is bipartite then $H-S$ is bipartite as well.
Hence, $\chi(H)=3$, $\ivs(H)\le 3$ and $\vs(H)\le 2$.

If there is $v\in V(H)$ such that $\chi(H-v)=2$, then $v\notin V(G)$.
So $v$ is obtained by subdividing an edge, say $xy$, of $G$.
However, as $H-v$ is bipartite, both $G-x$ and $G-y$ are bipartite, a contradiction.
Hence, $\vs(H)=2$.

Finally, let us show that $\ivs(H)= 3$. Supposing the opposite, there are $u,v\in V(H)$ such that $\chi(H-\{u,v\})=2$ and $uv\notin E(H)$. It cannot be that both $u$ and $v$ are in $V(G)$, because we did not subdivide edges connecting vertices of
$\C(G)$.
So we may assume that $v$ is obtained by subdividing an edge $xy$ of $G$, where $y\notin\C(G)$.
Since $H-\{u,v\}$ is bipartite, so is $H-\{u,y\}$.
But then $u$ cannot be a vertex of $G$ as well.
Hence, $u$ is obtained by subdividing an edge $wz$ of $G$.
As $H-\{u,y\}$ is bipartite, so is $H-\{z,y\}$. However, this contradicts the fact that $y\notin\C(G)$.
\end{proof}

From Proposition~\ref{pro:subdivide} we deduce that $S_n\neq\emptyset$ for every $n\ge9$. Indeed, if $n$ is odd then take $G_9$, subdivide the edge $u_2w_1$ with $n-9$ new vertices and denote the resulting graph by $G_n$.
Analogously if $n$ is even then take $G_{10}$, subdivide the edge $u_2w_1$ with $n-10$ new vertices and denote the resulting graph by $G_n$.
Then $G_n$ is a connected planar graph and $G_n\in S_n$, by Proposition~{\ref{pro:subdivide}}.

\smallskip

Our next result shows that $S_n$ contains exponentially many planar graphs. Its proof relies on the following construction. We  pair up several new vertices of $H$, i.e., vertices belonging in $V(H)\backslash V(G)$ to obtain a graph $F$  such that both $F-\{v_1,v_2\}$ and $F-\{u_1,u_2,u_3\}$ are bipartite.
Then $F\in S_{n'}$ for some $n'$.

\begin{theorem}
\label{thm:many}
For each $n\ge 11$ there are at least $2^{\lfloor\frac{n-11}2\rfloor}$ 2-connected planar graphs in $S_n$.
\end{theorem}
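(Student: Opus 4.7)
The plan is to combine the subdivision construction of Proposition~\ref{pro:subdivide} with an independent binary choice of ``chord'' edges between pairs of the subdivision vertices. Start with $G = G_9$ when $n$ is odd and $G = G_{10}$ when $n$ is even, and subdivide the edge $e = u_2 w_1 \in E(G) \setminus E([\C(G)])$ with $2k$ new vertices, producing the path $u_2 = z_0, z_1, \ldots, z_{2k}, z_{2k+1} = w_1$, where $k = (n-9)/2$ or $k = (n-10)/2$ according to parity. Proposition~\ref{pro:subdivide} places the resulting graph $H$ in $S_n$. For each index $i \in \{1, 2, \ldots, k-1\}$, independently decide whether to insert the chord $z_{2i-1} z_{2i+2}$; different choices yield different edge sets, so this produces $2^{k-1} = 2^{\lfloor (n-11)/2 \rfloor}$ pairwise distinct graphs $F$.

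Next, verify that each such $F$ is a 2-connected planar member of $S_n$. The bound $\Delta(F) = 4$ holds because every $z_j$ is an endpoint of at most one chord, raising its degree from $2$ to at most $3$, while old vertices of $G$ keep their degrees. The chromatic number equals $3$: the triangle $v_1 v_2 v_3$ inherited from $G$ forces $\chi(F) \ge 3$, while a proper $3$-colouring of $G$ extends to $F$ by colouring the subdivision path alternately in two colours, consistently with every chord $z_{2i-1} z_{2i+2}$ because its endpoints have opposite parity. The pair $\{v_1, v_2\}$ realizes $\vs(F) \le 2$ since the parity-preserving chords keep $F - \{v_1, v_2\}$ bipartite; conversely $\vs(F) \ge 2$ because for every $v \in V(F)$ one of the several triangles of $G$ survives in $F - v$. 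Planarity is secured by placing chords with odd index $i$ inside the face of $G$ on one side of $e$, and chords with even index on the other side: within each face the selected chords cover pairwise disjoint intervals of the subdivided path and can therefore be realized by non-crossing arcs. Two-connectivity of $F$ is inherited from the $2$-connectivity of $G$ through subdivision and edge addition.

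The main obstacle is showing $\ivs(F) = 3$, i.e.\ that no independent pair $\{a, b\} \subseteq V(F)$ satisfies $\chi(F - \{a, b\}) = 2$. Arguing by contradiction, three cases arise. If both $a, b \in V(G)$, then $G - \{a, b\} \subseteq F - \{a, b\}$ is bipartite while $ab \notin E(G)$ (no chord touches $V(G)$), contradicting $\ivs(G) = 3$. If exactly one of them is a subdivision vertex, say $b = z_j$ with $a \in V(G)$, then $H - \{a, z_j\} \subseteq F - \{a, z_j\}$ is also bipartite, and the argument in the proof of Proposition~\ref{pro:subdivide} applies verbatim to force an endpoint of $e$ into $\C(G)$, contradicting that both endpoints of $e$ lie outside $\C(G)$. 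Finally, if both $a = z_i$ and $b = z_j$ are subdivision vertices, then the subgraph of $F - \{z_i, z_j\}$ induced on $V(G)$ coincides with $G - e$, which still contains the triangle $v_1 v_2 v_3$ and is therefore not bipartite. In every case we reach a contradiction, so $\ivs(F) = 3$, completing the proof.
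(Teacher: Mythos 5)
Your construction is essentially the paper's (subdivide $u_2w_1$ in $G_9$ or $G_{10}$ and then choose, for each of $\lfloor\frac{n-11}{2}\rfloor$ potential parity\-/preserving chords between subdivision vertices, whether to add it), but there is a genuine gap at the decisive step: you conclude that ``different choices yield different edge sets, so this produces $2^{\lfloor(n-11)/2\rfloor}$ pairwise distinct graphs.'' Distinct edge sets on a common labelled vertex set only give distinct \emph{labelled} graphs; the theorem counts graphs in $S_n$, i.e.\ isomorphism types, so you must show that different chord sets yield pairwise \emph{non-isomorphic} graphs. This is where the bulk of the paper's proof lives: it shows that for any nonempty chord set the automorphism group of the resulting graph is trivial, by first locating the six degree-$4$ vertices, separating $\{u_1,u_2,u_3\}$ from $\{v_1,v_2,v_3\}$ by counting triangles through each, pinning down $u_1$, and then distinguishing $u_2$ from $u_3$ (hence fixing an orientation of the subdivided path) by comparing the lengths of the maximal degree-$2$ subpaths emanating from them; only then can one read off the chord set from the graph. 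Your short chords $z_{2i-1}z_{2i+2}$ may well admit an analogous rigidity argument (the path reversal does not map your chord pattern to itself, for parity reasons), but no such argument is given, and without it the count of isomorphism classes is not established.

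A secondary, fixable slip is in Case 1 of your $\ivs(F)=3$ argument: from ``no chord touches $V(G)$'' you infer $ab\notin E(G)$, but this overlooks the pair $\{a,b\}=\{u_2,w_1\}$, which is adjacent in $G$ yet independent in $F$ because the edge $u_2w_1$ was subdivided; for that pair the appeal to $\ivs(G)=3$ does not apply. The cleanest repair, which also subsumes your three cases, is the paper's one-line argument: any pair independent in $F$ is independent in the spanning subgraph $H=G_n$, and bipartiteness of $F-\{a,b\}$ forces bipartiteness of $H-\{a,b\}$, contradicting $\ivs(G_n)=3$ from Proposition~\ref{pro:subdivide}. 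The remaining verifications in your proposal (maximum degree, $\chi=3$, $\vs=2$, planarity via distributing odd- and even-indexed chords to the two faces along the path, and $2$-connectivity) are sound.
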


\begin{proof}
In view of $G_{11}$ and $G_{12}$, we assume $n\ge 13$.
Take $G_n$ and relabel the vertices of the $u_2-u_3$ path that passes through $w_1$ by $u_2=a_0,a_1,a_2,\dots,a_{\ell-1}=w_1,a_{\ell}=u_3$; here $\ell=n-7$ if $n$ is odd and $\ell=n-8$ if $n$ is even.
Let $E_n=\{a_1a_{\ell-2},a_2a_{\ell-3},\dots,a_{\ell/2-2}a_{\ell/2+1}\}$. For every $E'\subseteq E_n$,
denote by $H_{n,E'}$ the graph obtained from $G_n$ by adding the edges of $E'$.
Obviously $H_{n,E'}$ is planar, $\Delta(H_{n,E'})=4$ and $\chi(H_{n,E'})=3$.
Moreover, $H_{n,E'}-\{x,y\}$ is bipartite if $\{x,y\}=\{v_i,v_j\}$ where $1\le i<j\le 3$, which implies that $\vs(H_{n,E'})=2$.
Also, $H_{n,E'}-\{u_2,v_2,w_2\}$ is bipartite which gives $\ivs(H_{n,E'})\le 3$.
On the other hand, since $G_n$ is a subgraph of $H_{n,E'}$ and $\ivs(G_n)=3$, we have $\ivs(H_{n,E'})=3$ as well.
Thus $H_{n,E'}\in S_n$.

\begin{figure}[ht!]
	$$
    \includegraphics[scale=0.25]{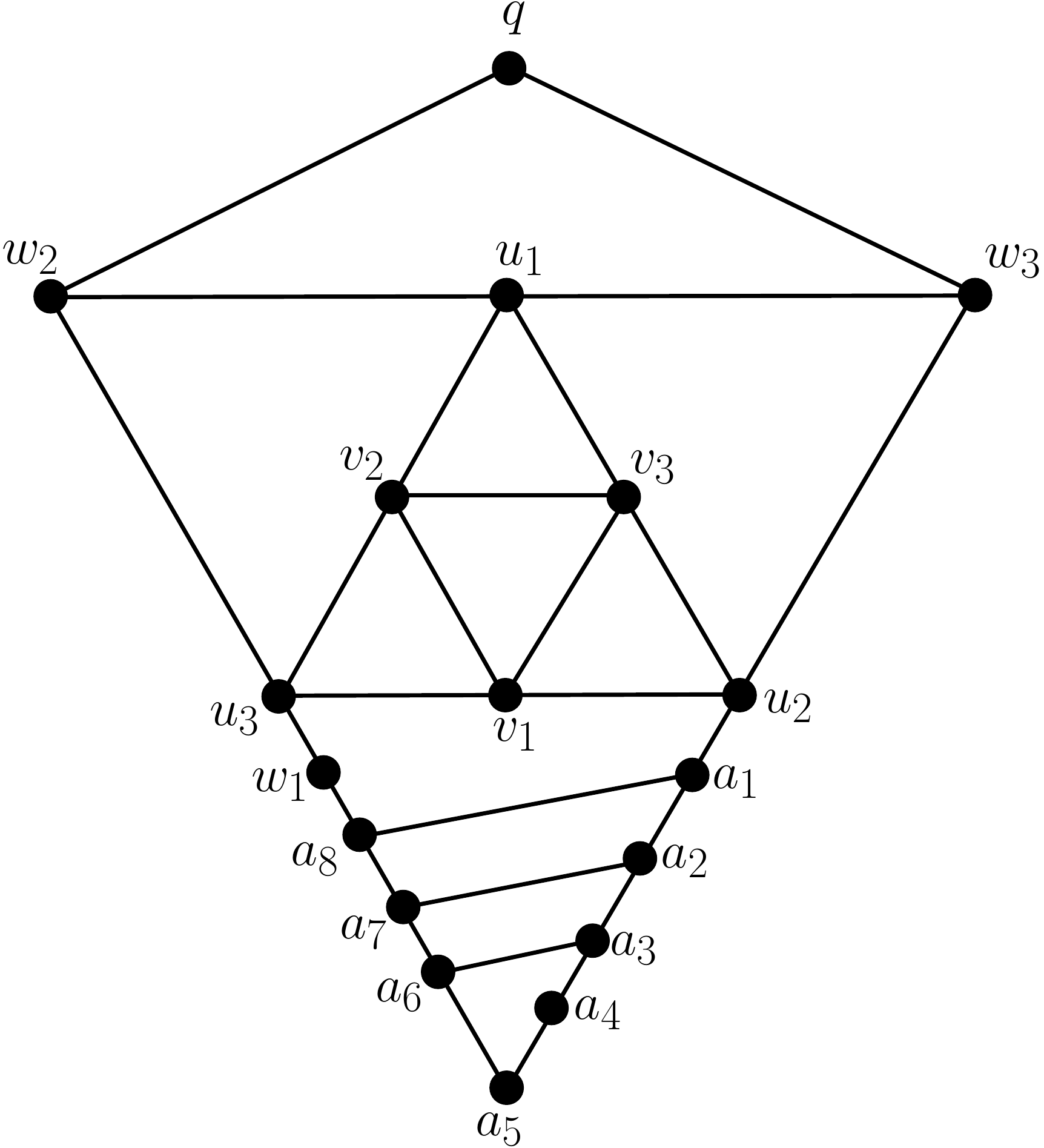}
	$$
	\caption{The graph $H_{18,E_{18}}$.}
	\label{fig:h18}
\end{figure}

Let $E',E^*\subseteq E_n$, where $E'\ne E^*$.
We show that the graphs $H_{n,E'}$ and $H_{n,E^*}$ are not isomorphic.
This is obvious if $|E'|\ne |E^*|$.
So assume that $|E'|=|E^*|\ge 1$.
We show that $\Aut(H_{n,E'})$, the group of automorphisms of $H_{n,E'}$, (and also $\Aut(H_{n,E^*})$) is trivial.
That is, every automorphism of $H_{n,E'}$ fixes all the vertices of $H_{n,E'}$.

There are exactly $6$ vertices of degree $4$ in $H_{n,E'}$, namely $u_1,u_2,u_3,v_1,v_2,v_3$.
Since each of $u_1,u_2,u_3$ is in only one triangle in $H_{n,E'}$ whereas each of $v_1,v_2,v_3$ is in three such triangles, every automorphism must preserve the sets $\{u_1,u_2,u_3\}$ and $\{v_1,v_2,v_3\}$.
The vertices $u_1$ and $u_2$ are both adjacent to a vertex of $H_{n,E'}-\{v_1,v_2,v_3\}$.
Also the vertices $u_1$ and $u_3$ are both adjacent to a vertex of $H_{n,E'}-\{v_1,v_2,v_3\}$.
But $u_2$ and $u_3$ are not adjacent to a vertex of $H_{n,E'}-\{v_1,v_2,v_3\}$, because $n\ge 13$. Consequently, every automorphism of $H_{n,E'}$ fixes $u_1$.

In view of  $a_{\ell-1}(=w_1)$, the vertex $u_3$ has a neighbour of degree $2$ which is not connected to $u_1$.
If $u_2$ does not have such a neighbour, then every automorphism of $H_{n,E'}$ fixes also $u_2$ and $u_3$.
So assume that also $u_2$ has a neighbour of degree $2$ which is not connected to $u_1$.
Now start at $u_2$, proceed with the above mentioned neighbour of $u_2$ and construct a longest path $P_2$, interior vertices of which have all degree 2.
Analogously start at $u_3$, proceed with the above mentioned neighbour of $u_3$ and construct a longest path $P_3$, interior vertices of which have all degree 2.
Finally, let $i$ be the smallest index such that $a_ia_{\ell-1-i}\in E'$.
Then $P_2$ has length $i$ while $P_3$ has length $i+1$.
Hence, every automorphism of $H_{n,E'}$ must fix also $u_2$ and $u_3$.
Consequently, every automorphism of $H_{n,E'}$ fixes all the vertices of $H_{n,E'}$, and so $H_{n,E'}$ and $H_{n,E^*}$ are not isomorphic graphs.

Since $E_n$ has $\frac{\ell}{2}-2=\lfloor\frac{n-7}2\rfloor-2=\lfloor\frac{n-11}2\rfloor$ edges and every subset gives different graph, there are exactly $2^{\lfloor\frac{n-11}2\rfloor}$ nonisomorphic graphs $H_{n,E'}$.
\end{proof}

We conclude the paper by presenting another, more general, construction of graphs $G$ with $\Delta(G)=4$, $\chi(G)=3$, $\ivs(G)=3$ and $\vs(G)=2$. Let $H$ be a bipartite graph with $\Delta(H)\le4$ such that there exists a cycle $C_{2k}\subseteq H$ with $k\ge3$
and a pair $a,b\in V(C_{2k})$ of non-adjacent vertices in $H$ on odd distance $d_H(a,b)$ and having $\deg_H(a)=\deg_H(b)=2$. Take the union of $H$ with a disjoint triangle $K_3=uvw$ and add the edges $av,bv,aw,bw$. Denote the resulting graph by $G$ (see Figure~\ref{fig:k}).

\begin{figure}[htp!]
	$$
    \includegraphics[scale=0.5]{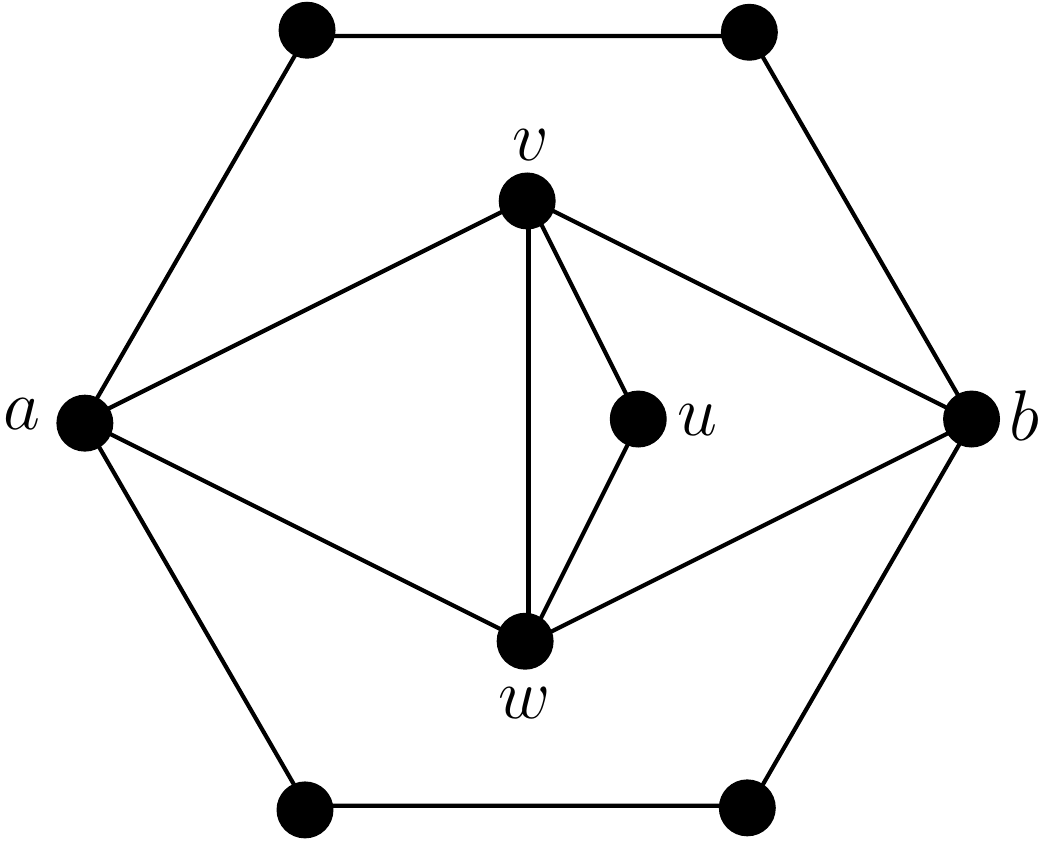}
	$$
	\caption{The graph $G$ if $H=C_6$.}
	\label{fig:k}
\end{figure}

\begin{proposition}
    \label{bipartite}
If $H$ is of order $m$ then $G\in S_{m+3}$.
Moreover, if $H$ is 2-connected (resp. planar) then $G$ has the same feature.
\end{proposition}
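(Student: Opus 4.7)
The plan is to verify in order the five defining properties of $S_{m+3}$ and then the two inheritance claims.

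First, $|V(G)|=m+3$ and $\Delta(G)=4$ follow by a direct degree count: the only vertices whose degrees change from $H$ to $G$ are $a,b$ (each raised from $2$ to $4$) and the new vertices $u,v,w$ (with $\deg_G(u)=2$ and $\deg_G(v)=\deg_G(w)=4$), and $\Delta(H)\le 4$. The triangle $uvw$ gives $\chi(G)\ge 3$. For $\chi(G)\le 3$, any proper $2$-coloring of the bipartite graph $H$ assigns $a$ and $b$ different colors since $d_H(a,b)$ is odd; recoloring both with a new color $3$ remains proper in $H$ (because $ab\notin E(H)$), and then $v\mapsto 1$, $w\mapsto 2$, $u\mapsto 3$ completes a proper $3$-coloring of $G$.

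For $\vs(G)\le 2$, deleting $\{v,w\}$ isolates $u$ and leaves the bipartite graph $H$. For $\vs(G)\ge 2$, I would check each $x\in V(G)$: if $x\notin\{u,v,w\}$ the triangle $uvw$ survives; if $x=u$ the triangles $avw$ and $bvw$ survive; if $x\in\{v,w\}$ the remaining gadget vertex is still adjacent to both $a$ and $b$, and together with any path from $a$ to $b$ in $H$ (necessarily of odd length, since $H$ is bipartite and $d_H(a,b)$ is odd) it closes an odd cycle. For $\ivs(G)\le 3$, the set $\{u,a,b\}$ is independent in $G$ and $G-\{u,a,b\}$ is the disjoint union of the bipartite graph $H-\{a,b\}$ with the edge $vw$. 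For $\ivs(G)\ge 3$, a similar case analysis over all independent pairs applies; the added ingredient is that $H-x$ still contains a path from $a$ to $b$ for every single $x\in V(H)$, because $a,b$ lie on the cycle $C_{2k}$, which provides two internally disjoint paths from $a$ to $b$ in $H$.

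For 2-connectedness of $G$ when $H$ is 2-connected, a case-by-case check over each $x\in V(G)$ shows $G-x$ is connected: for $x\in V(H)$, $H-x$ is connected and the gadget stays attached through the remaining one of $\{a,b\}$, while the three cases $x\in\{u,v,w\}$ are immediate. For planarity, I would start from a planar embedding of $H$ and identify a face whose boundary contains both $a$ and $b$; inside such a face one can successively draw $v$ and $w$ each adjacent to $a$ and $b$, the chord $vw$, and $u$ in one of the new subfaces bounded by $vw$. This amounts to embedding the planar gadget $K_{2,3}+vw$ (with parts $\{a,b,u\}$ and $\{v,w\}$ in the $K_{2,3}$) with $a,b$ fixed on the chosen face. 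To produce such a common face, one exploits $\deg_H(a)=\deg_H(b)=2$ together with $a,b\in V(C_{2k})$: since the only edges of $H$ at $a$ and $b$ are cycle edges, the rest of $H$ is attached to $C_{2k}$ only through $V(C_{2k})\setminus\{a,b\}$, so one can re-embed by Whitney flips across the corresponding $1$- and $2$-cuts to move everything to one side of $C_{2k}$, turning the other side into a face. I expect this final re-embedding step to be the main technical obstacle; the earlier items reduce to routine case checks.
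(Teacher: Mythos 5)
Your verification that $G\in S_{m+3}$ follows the paper's own proof in all essentials: the degree count, the $3$-colouring obtained by recolouring the independent pair $\{a,b\}$ with a third colour (the paper instead recolours $b$ and its neighbourhood, a cosmetic difference), the bound $\ivs(G)\le3$ via the independent set $\{u,a,b\}$, and the lower bounds via the triangle $uvw$, the triangles $avw$ and $bvw$, and the odd cycles formed by one of $v,w$ together with one of the two internally disjoint $a$--$b$ arcs of $C_{2k}$ (both arcs odd because $H$ is bipartite and $d_H(a,b)$ is odd). Your $\ivs(G)\ge3$ case analysis is only sketched, but the ingredients you name are exactly the paper's: an offending independent pair must meet $\{u,v,w\}$ in a single vertex, that vertex cannot be $u$, so it is $v$ or $w$, and the second deleted vertex then misses at least one of the four odd cycles $P\cup avb$, $P\cup awb$, $Q\cup avb$, $Q\cup awb$. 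The $2$-connectivity check is also fine.

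The genuine gap is the planarity step, which you correctly single out as the main obstacle (the paper offers no proof of the ``Moreover'' clause at all). Your reduction is right: one needs an embedding of $H$ with $a$ and $b$ on a common face, equivalently the planarity of $H+ab$. But the proposed fix --- Whitney flips pushing all bridges of $C_{2k}$ to one side --- fails, because distinct bridges of the cycle can interlace and are then forced to opposite sides in every embedding. In fact the assertion itself is false under the stated hypotheses: take $H=K_{3,3}-ab$ with parts $\{a,c,e\}$ and $\{b,d,f\}$. This $H$ is planar, bipartite, $2$-connected, has $\Delta(H)=3$, $\deg_H(a)=\deg_H(b)=2$, $ab\notin E(H)$, $d_H(a,b)=3$ (odd), and $a,b$ lie on the Hamilton $6$-cycle $a\,d\,c\,b\,e\,f\,a$, so every hypothesis holds; yet $H$ together with the path $a\,v\,b$ is a subdivision of $K_{3,3}$, so $G$ is not planar. (Here the two chords $cf$ and $de$ interlace on the $6$-cycle and each separates $a$ from $b$, which is exactly the obstruction your re-embedding argument overlooks.) To salvage the planarity claim one must add the hypothesis that $a$ and $b$ are cofacial in some planar embedding of $H$; it does not follow from the assumptions as given.
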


\begin{proof}
Clearly, the order of $G$ is $m+3$.
 As $\Delta(H)\le4$ and $\deg_H(a)=\deg_H(b)=2$ we have $\Delta(G)=4$. In view of the triangle $uvw$, the graph $G$ is not bipartite. In order to show $\chi(G)=3$, note that $G-\{u,v,w\}$ is bipartite. Take a proper $2$-colouring $\varphi$ of $G-\{u,v,w\}$ with colours $1$ and $3$, say $\varphi(a)=1$ and $\varphi(b)=3$ (here we use that $d(a,b)$ is odd). Now change the colour of $b$ to $1$ and the colour of every $c\in N_{G-\{u,v,w\}}(b)$ to $2$. Note that by assigning the colour $1$ to $u$, the colour $2$ to $v$ and the colour $3$ to $w$ we obtain a proper $3$-colouring of $G$.

Let us show next that $\ivs(G)=3$. Since $G-\{a,b,u\}$ is bipartite, we have $\ivs(G)\le3$.
Suppose there are non-adjacent vertices $x,y$ such that $G-\{x,y\}$ is bipartite.
In view of the triangle $uvw$, the intersection $\{x,y\}\cap\{u,v,w\}$ is a singleton.
We argue that this intersection is not the vertex $u$ due to the triangles $avw$ and $bvw$.
Let $P$ and $Q$ be the two $a-b$ paths in $C_{2k}$, and recall that both these paths are of odd lengths. Consequently, each of the cycles $C'=P\cup avb$, $C''=P\cup awb$, $C'''=Q\cup avb$, and $C''''=Q\cup awb$ is odd. Hence $\{x,y\}\cap\{v,w\}\neq\emptyset$, which further implies that $\{x,y\}\cap\{a,b\}=\emptyset$. However, then at least one of the cycles $C',C'',C''',C''''$ appears in $G-\{x,y\}$. The obtained contradiction shows $\ivs(G)=3$.

Finally, we prove that $\vs(G)=2$.
Clearly $\vs(G)\le2$, because $G-\{v,w\}$ is bipartite.
And since $\vs(G)=1$ implies $\ivs(G)=1$, we have $\vs(G)=2$.
\end{proof}

Note in passing that the order of the bound $|S_n|\ge2^{\lfloor\frac{n-11}2\rfloor}$ obtained in Theorem~\ref{thm:many} is far from (asymptotically) optimal. Propositions~\ref{pro:subdivide} and~\ref{bipartite} enable one to construct connected planar graphs within $S_n$ with considerable ease. However, establishing a more precise asymptotic estimate of $|S_n|$ was not the focus of this short article; instead, the aim was simply to point out to the existence of exponentially many graphs in $S_n$.

%

\bigskip

\bigskip\noindent\textbf{Acknowledgments.}~~The first author acknowledges
partial support by Slovak research grants VEGA 1/0567/22, VEGA 1/0206/20,
APVV--19--0308, APVV--17--0428. All authors acknowledge partial support of the Slovenian research agency
ARRS program P1-0383 and ARRS project J1-3002.


\begin{thebibliography}{99}

\bibitem{ABBDMM}
S. Akbari, A. Beikmohammadi, B. Bre{\v s}ar, T. Dravec, M. Mahdi Habibollahi, N. Movarraei,
{\em On the chromatic edge stability index of graphs},
arXiv:2018.10657 [math.CO]; also European J. Combin., to appear.

\bibitem{AKMN}
S. Akbari, S. Klav{\v z}ar, N. Movarraei, M. Nahvi,
{\em Nordhaus-Gaddum and other bounds for the chromatic edge-stability number},
European J. Combin. \textbf{84} (2020), 103042, 8pp.

\bibitem{ABKM}
S. Akbari, A. Beikmohammadi, S. Klav{\v z}ar, N. Movarraei,
{\em On the chromatic vertex stability number of graphs},
European J. Combin. \textbf{102} (2022), 103504, 10pp.

\bibitem{ASM}
S. Arumugam, I. Sahul Hamid, A. Muthukamatchi,
{\em Independent domination and graph colorinds},
Discrete Mathematics, 195-203, Ramanujan Math. Soc. Lect. Notes Ser., 7, Ramanujan Math. Soc., Mysore, 2008.

\bibitem{BHNS}
D. Bauer, F. Harary, J. Nieminen, C.L. Suffel,
{\em Domination alteration sets in graphs},
Discrete Math. \textbf{47} (1993), 153--161.

\bibitem{BKM}
B. Bre{\v s}ar, S. Klav{\v z}ar, N. Movarraei,
{\em Critical graphs for the chromatic edge-stability number},
Discrete Math. \textbf{343} (2020), 111845, 7 pp.

\bibitem{KMM}
A. Kemnitz, M. Marangio, N. Movarraei,
{\em On the chromatic edge stability number of graphs},
Graphs \& Combin. \textbf{34} (2018), 1539--1551.

\bibitem{S}
W. Staton,
{\em Edge deletions and the chromatic number},
Ars Combin. \textbf{10} (1990), 103--106.

\end{thebibliography}
\end{document}